\newtheorem{thm}{Theorem}
\newtheorem{lem}[thm]{Lemma}
\newtheorem{cor}[thm]{Corollary}
\theoremstyle{definition}
\newtheorem{rem}[thm]{Remark}
\begin{document}
\title{A (forgotten) upper bound for the spectral radius of a graph}
\author{Clive Elphick\thanks{\texttt{clive.elphick@gmail.com}},~~~~~~
Chia-an Liu\thanks{\texttt{twister.imm96g@g2.nctu.edu.tw}}}
\maketitle

\abstract{The best degree-based upper bound for the spectral radius is due to Liu and Weng. This paper begins by demonstrating that a (forgotten) upper bound for the spectral radius dating from 1983 is equivalent to their much more recent bound. This bound is then used to compare lower bounds for the clique number.
A series of line graph based upper bounds for the Q-index is then proposed and compared experimentally with a graph based bound.
Finally a new lower bound for generalised $r-$partite graphs is proved, by extending a result due to Erd\"os.}

\section{Introduction}

Let $G$ be a simple and undirected graph with $n$ vertices, $m$ edges, and degrees $\Delta = d_1 \ge d_2 \ge ... \ge d_n = \delta$. Let $d$ denote the average vertex degree, $\omega$ the clique number and $\chi$ the chromatic number. Finally let $\mu(G)$ denote the spectral radius of $G$, $q(G)$ denote the spectral radius of the signless Laplacian of $G$ and $G^L$ denote the line graph of $G$.

In 1983, Edwards and Elphick \cite{edwards83} proved in their Theorem 8 (and its corollary) that $\mu \le y - 1$, where $y$ is defined by the equality:

\begin{equation}\label{eq:edwards}
y(y - 1) = \sum_{k = 1}^{\lfloor{y}\rfloor} d_k + (y - \lfloor{y}\rfloor)d_{\lceil{y}\rceil}.
\end{equation}

Edwards and Elphick \cite{edwards83} show that $1 \le y \le n$ and that $y$ is a single-valued function of $G$.

This bound is exact for regular graphs because, we then have that:

\[
d = \mu \le y - 1 = \frac{1}{y}\left(\sum_{k = 1}^{\lfloor{y}\rfloor} d + (y - \lfloor{y}\rfloor)d\right) = d.
\]

The bound is also exact for various bidegreed graphs. For example, let G be the Star graph on $n$ vertices, which has $\mu =\sqrt{n - 1}$. It is easy to show that $\lfloor \sqrt{n - 1} \rfloor < y < \lceil \sqrt{n - 1} \rceil$.  It then follows that $y$ is the solution to the equation:

\[
y(y - 1) = (n - 1) + \lfloor \sqrt{n - 1} \rfloor - 1 + (y - \lfloor \sqrt{n - 1} \rfloor) = n - 2 + y,
\]
which has the solution $y = 1 + \sqrt{n - 1}$, so $\mu \le y - 1 = \sqrt{n - 1}$.

Similarly let G be the Wheel graph on $n$  vertices, which has $\mu = 1 + \sqrt{n}$. It is straightforward to show that $y = 2 + \sqrt{n}$ is the solution to (\ref{eq:edwards}) so again the bound is exact.

\section{An upper bound for the spectral radius}

The calculation of $y$ can involve a two step process.

1. Restrict $y$ to integers, so (\ref{eq:edwards}) simplifies to:

\[
y(y - 1) = \sum_{k=1}^y d_k.
\]

Since $d \le \mu$, we can begin with $y = \lfloor d + 1 \rfloor$, and then increase $y$ by unity until $y(y - 1) \ge \sum_{k=1}^y d_k$.  This determines that either $y = a$ or $a < y < a + 1$, where $a$ is an integer.

2. Then, if necessary,  solve the following quadratic equation:

\begin{equation}\label{eq:elphick}
y(y - 1) = \sum_{k=1}^a d_k + (y - a)d_{a+1}.
\end{equation}

For convenience let $c = \sum_{k=1}^a d_k$. Equation (\ref{eq:elphick}) then becomes:

\[
y^2 - y(1 + d_{a+1}) - (c - ad_{a+1}) = 0.
\]

Therefore

\[
y = \frac{d_{a+1} + 1 + \sqrt{(d_{a+1} + 1)^2 + 4(c - ad_{a+1})}}{2}
\]

so

\[
\mu \le y - 1 = \frac{d_{a+1} - 1 + \sqrt{(d_{a+1} + 1)^2 + 4(c - ad_{a+1})}}{2}.
\]

This two step process can be combined as follows, by letting $a + 1 = k$:

\begin{equation}\label{eq:elphick2}
\mu \le  \frac{d_k - 1 + \sqrt{(d_k + 1)^2 + 4\sum_{i=1}^{k-1}(d_i - d_k)}}{2}, \mbox{  where  } 1 \le k \le n.
\end{equation}

In 2012, Liu and Weng \cite{liu2013} proved (\ref{eq:elphick2}) using a different approach. They also proved there is equality if and only if $G$ is regular or there exists $2 \le t \le k$ such that $d_1 = d_{t-1} = n - 1$ and $d_t = d_n$. Note that if $k = 1$ this reduces to $\mu \le \Delta$.

If we set $k = n$ in (\ref{eq:elphick2}) then:

\[
 \mu \le \frac{\delta - 1 + \sqrt{(\delta + 1)^2 - 4n\delta + 8m}}{2}
\]

which was proved by Nikiforov \cite{nikiforov02} in 2002.

\section{Lower bounds for the clique number}

Tur\'an's Theorem, proved in 1941, is a seminal result in extremal graph theory. In its concise form it states that:

\[
\frac{n}{n - d} \le \omega(G).
\]

Edwards and Elphick \cite{edwards83} used $y$ to prove the following lower bound for the clique number:

\begin{equation}\label{eq:chris}
\frac{n}{n - y + 1} < \omega(G) + \frac{1}{3}.
\end{equation}

In 1986, Wilf \cite{wilf86} proved that:

\[
\frac{n}{n - \mu} \le \omega(G).
\]

Note, however, that:

\[
\frac{n}{n - y + 1} \not\le \omega(G),
\]

since for example $\frac{n}{n - y + 1} = 2.13$ for  $K_{7,9}$ and $\frac{n}{n - y + 1} = 3.1$ for $K_{3,3,4}$.

Nikiforov \cite{nikiforov02} proved a conjecture due to Edwards and Elphick \cite{edwards83} that:

\begin{equation}\label{eq:niki}
\frac{2m}{2m - \mu^2} \le \omega(G).
\end{equation}

Experimentally, bound (\ref{eq:niki}) performs better than bound (\ref{eq:chris}) for most graphs.

\section{Upper bounds for the Q-index}
Let $q(G)$ denote the spectral radius of the signless Laplacian of $G$. In this section we investigate graph and line graph based bounds for $q(G)$ and then compare them experimentally.

\subsection{Graph bound}

Nikiforov \cite{nikiforov14} has recently strengthened various upper bounds for $q(G)$ with the following theorem.

\begin{thm}
If $G$ is a graph with $n$ vertices, $m$ edges, with maximum degree $\Delta$ and minimum degree $\delta$, then

\[
q(G) \le min\left(2\Delta, \frac{1}{2}\left(\Delta + 2\delta - 1 + \sqrt{(\Delta + 2\delta - 1)^2 + 16m -8(n - 1 + \Delta)\delta}\right)\right).
\]

Equality holds if and only if $G$ is regular or $G$ has a component of order $\Delta + 1$ in which every vertex is of degree $\delta$ or $\Delta$, and all other components are $\delta$-regular.

\end{thm}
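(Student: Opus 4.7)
The plan is to prove both inequalities from the Perron eigenvector $x > 0$ of $Q = D + A$. The first bound $q \le 2\Delta$ is immediate from the eigenvalue equation at a vertex $v_1$ with maximum Perron entry: writing $q x_{v_1} = d_{v_1} x_{v_1} + \sum_{w \sim v_1} x_w$ and using $x_w \le x_{v_1}$ for each of the $d_{v_1} \le \Delta$ neighbours gives $q \le 2 d_{v_1} \le 2\Delta$.

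For the stronger bound I would extract two coupled inequalities from the Perron equations and combine them into a single quadratic in $q$. Normalise $x$ so $x_{v_1} = \max_v x_v = 1$ and set $z = \max_{v \ne v_1} x_v$. The eigenvalue equation at $v_1$ yields the \emph{local} inequality $q \le d_{v_1}(1+z) \le \Delta(1+z)$, so $z \ge q/\Delta - 1$. For a \emph{global} inequality, I would sum $(q - d_v) x_v = \sum_{w \sim v} x_w$ over $v \ne v_1$, use the identity $q \sum_v x_v = 2 \sum_v d_v x_v$ (which follows from $\mathbf{1}^{T} Q = 2 \mathbf{1}^{T} D$), and then bound the resulting sum using $\sum_v d_v = 2m$, the lower bound $d_v \ge \delta$, and the contributions of $v_1$ and its neighbourhood separately. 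The shape of the stated bound, in which $16m$ and $(n-1+\Delta)\delta$ appear, strongly suggests the global inequality will simplify to something like $q\cdot(\text{linear in }z) \le 4m - 2\delta(n-1+\Delta) + (\text{linear in }z)$; substituting $z \ge q/\Delta - 1$ should then produce exactly the quadratic $q^{2} - q(\Delta + 2\delta - 1) \le 4m - 2(n-1+\Delta)\delta$, whose larger root is the stated bound.

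Equality analysis then proceeds by tracking where each step is tight. Equality in the local inequality forces $d_{v_1} = \Delta$ and every neighbour of $v_1$ to have Perron value exactly $z$; equality in the global inequality forces each vertex off $v_1$ either to have Perron value $z$ or to sit in a $\delta$-regular component disjoint from the component of $v_1$. These conditions collapse to the two cases in the theorem: $G$ regular, or one component of order $\Delta + 1$ in which every vertex has degree $\delta$ or $\Delta$, with all other components $\delta$-regular.

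The main obstacle will be identifying the precise weighting (or partition of vertices into $v_1$, neighbours of $v_1$, and the remainder) for the global sum so that the constant $2(n - 1 + \Delta)\delta$ emerges cleanly rather than a weaker surrogate such as $2n\delta$; a second subtlety is that the disconnected case must be handled, since the theorem allows arbitrary $\delta$-regular components outside the principal one, and standard Perron arguments require restricting to the component realising $q(G)$ before transferring the conclusion back.
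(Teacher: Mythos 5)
First, a point of reference: the paper does not prove this theorem at all --- it is quoted verbatim from Nikiforov \cite{nikiforov14} as background for Section 4, so there is no in-paper proof to compare yours against. Your attempt therefore has to stand on its own.

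As it stands it does not. The easy half ($q \le 2\Delta$ via the maximal Perron entry) is fine, and you have correctly identified the quadratic $q^{2} - (\Delta + 2\delta - 1)q \le 4m - 2(n - 1 + \Delta)\delta$ whose larger root is the stated expression. But the entire content of the theorem lies in establishing that quadratic, and your ``global inequality'' is never derived: you write that the shape of the answer ``strongly suggests'' the sum over $v \ne v_1$ ``will simplify to something like'' the right form, and you yourself flag that you do not know how to make the constant $2(n-1+\Delta)\delta$ (rather than a weaker surrogate) emerge. That is reverse-engineering from the conclusion, not a proof. There are also unchecked sign and scaling issues in the proposed combination: substituting $z \ge q/\Delta - 1$ into an inequality that is ``linear in $z$'' only preserves the direction if the relevant coefficient of $z$ has the right sign, and it introduces a factor $1/\Delta$ that is absent from the target quadratic, so the linear-in-$z$ terms would have to carry a coefficient proportional to $\Delta$ --- none of which is verified. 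The equality analysis is likewise asserted rather than carried out; the characterisation here is unusually delicate (a component of order exactly $\Delta + 1$, i.e.\ the component of $v_1$ consists of $v_1$ together with its neighbourhood, plus arbitrary $\delta$-regular components elsewhere), and extracting it requires knowing precisely which intermediate inequality was tightened to $\sum_{w \sim v_1} d_w \le 2m - d_{v_1} - (n - 1 - d_{v_1})\delta$-type estimates, which your outline never sets up. In short: right family of techniques (Perron vector, local plus global estimates, as in Liu--Weng and Nikiforov's own arguments), but the decisive step is a placeholder, so this is a plan rather than a proof. To complete it, consult Nikiforov's paper cited as \cite{nikiforov14}, where the second bound is obtained by a careful two-step estimate of $(Q^{2}x)_{v_1}$ rather than by the single summed identity you propose.
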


\subsection{Line graph bounds}

The following well-known Lemma (see, for example, Lemma 2.1 in \cite{chen02}) provides an equality between the spectral radii of
the signless Laplacian matrix and the adjacency matrix of the line graph of a graph.

\begin{lem}\label{lem:chen02}
If $G^L$ denotes the line graph of $G$ then:
\begin{equation}\label{eq:chen02}
q(G) = 2 + \mu(G^L).
\end{equation}
\end{lem}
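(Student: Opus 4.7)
The approach I would take is the classical one via the vertex-edge incidence matrix. Let $B$ be the $n \times m$ $(0,1)$-matrix with $B_{ve} = 1$ iff the vertex $v$ is an endpoint of the edge $e$, and $B_{ve} = 0$ otherwise. The plan is to compute the two products $BB^T$ and $B^T B$, recognise them as the signless Laplacian of $G$ and as a shift of the adjacency matrix of $G^L$ respectively, and then conclude by comparing their spectra.

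The first step is an entrywise verification that $BB^T = D(G) + A(G) = Q(G)$: the diagonal entry $(BB^T)_{vv}$ counts the edges incident to $v$, giving $d_v$, while for $u \ne v$ the entry $(BB^T)_{uv}$ counts the edges joining $u$ and $v$, which equals $A(G)_{uv}$ since $G$ is simple. The second step is the analogous computation for $B^T B$, giving $B^T B = 2 I_m + A(G^L)$: the diagonal entry $(B^T B)_{ee}$ counts the two endpoints of $e$, and for $e \ne f$ the entry $(B^T B)_{ef}$ counts the endpoints of $e$ and $f$ in common, which is $1$ exactly when $e$ and $f$ share a vertex in $G$, i.e., are adjacent in $G^L$.

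To finish, I would invoke the standard linear-algebra fact that $BB^T$ and $B^T B$ have identical nonzero eigenvalues with the same multiplicities. Both matrices are positive semidefinite, so their spectral radii coincide. Moreover, since $B^T B$ is positive semidefinite, every eigenvalue of $A(G^L) = B^T B - 2 I_m$ is at least $-2$, so the largest eigenvalue of $2 I_m + A(G^L)$ is simply $2 + \mu(G^L)$. Combining these observations yields
\[
q(G) \;=\; \rho(BB^T) \;=\; \rho(B^T B) \;=\; 2 + \mu(G^L),
\]
where $\rho(\cdot)$ denotes spectral radius.

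I expect the only real obstacle to be that last shift step: one has to check that $\rho(2I_m + A(G^L))$ equals $2 + \mu(G^L)$ and not $|2 + \lambda_{\min}(A(G^L))|$. This is immediate from the $-2$ lower bound on the eigenvalues of $A(G^L)$, which drops out for free from the positive semidefiniteness of $B^T B$ established a line earlier, so in practice the argument is self-contained and short.
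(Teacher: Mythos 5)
Your proof is correct. The paper does not actually prove this lemma---it is stated as well known and attributed to Chen's Lemma 2.1---but the incidence-matrix argument you give ($BB^T = Q(G)$, $B^TB = 2I_m + A(G^L)$, equality of nonzero spectra, and positive semidefiniteness to handle the shift) is precisely the classical proof that the citation refers to, and all of your steps check out.
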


\medskip

Let $\Delta_{ij}=\{d_{i}+d_{j}-2 ~|~ i \sim j \}$
be the degrees of vertices in $G^{L},$
and $\Delta_{1} \geq \Delta_{2} \geq \ldots \geq \Delta_{m}$ be a renumbering of them in non-increasing order.
Cvetkovi\'c \emph{et al.} proved the following theorem using Lemma~\ref{lem:chen02}.
\begin{thm}\label{thm:cvetkovic07}(Theorem 4.7 in \cite{cvetkovic07})
$$q(G) \leq 2+\Delta_{1}$$
with equality if and only if $G$ is regular or semi-regular bipartite.
\end{thm}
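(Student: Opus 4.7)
The proof I would give reduces this to the most basic adjacency bound via the line-graph identity already established in Lemma~\ref{lem:chen02}. Concretely, I would first invoke that lemma to write $q(G) = 2 + \mu(G^L)$, so that the problem becomes bounding the spectral radius of the adjacency matrix of $H := G^L$. Then I would apply the trivial (but sharp) bound $\mu(H) \le \Delta(H)$, valid for any graph $H$.

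The second step is an explicit computation of $\Delta(G^L)$: a vertex of $G^L$ corresponds to an edge $ij$ of $G$, and its degree in $G^L$ equals the number of edges of $G$ adjacent to $ij$, which is $d_i + d_j - 2$. Maximizing over $i \sim j$ gives $\Delta(G^L) = \Delta_1$. Chaining the inequalities then yields
\[
q(G) \;=\; 2 + \mu(G^L) \;\le\; 2 + \Delta(G^L) \;=\; 2 + \Delta_1,
\]
which is the desired bound.

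For the equality case, I would use the standard fact that, on a connected graph, $\mu(H) = \Delta(H)$ holds if and only if $H$ is $\Delta$-regular (with a small extra argument handling disconnected $G$, where we apply the bound to the component of $G^L$ achieving $\mu$). It therefore remains to characterize those $G$ for which $G^L$ is regular, equivalently for which $d_i + d_j$ is constant on every edge $ij$ of $G$. I would argue that if two adjacent vertices $u,v$ in $G$ have $d_u \ne d_v$, then walking along any edge must alternate between the two degree values, forcing $G$ to be bipartite with one part of degree $d_u$ and the other of degree $d_v$; otherwise all degrees coincide and $G$ is regular.

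The main obstacle in turning this outline into a rigorous proof is precisely this last characterization of when $G^L$ is regular. The inequality itself is essentially immediate from Lemma~\ref{lem:chen02} plus $\mu \le \Delta$, so all the real work is in the equality analysis: one has to verify carefully that ``$d_i + d_j$ constant on edges'' on a connected graph forces either regularity or semi-regular bipartiteness, and conversely that both of these classes do produce a regular line graph achieving equality.
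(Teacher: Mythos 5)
Your proposal is correct and follows exactly the route the paper indicates: the paper cites this as proved by Cvetkovi\'c \emph{et al.} via Lemma~\ref{lem:chen02}, and its own Remark identifies the bound as $\psi_1$, i.e.\ the $k=1$ case of Lemma~\ref{lem:edwards83} (which is just $\mu(G^L)\le\Delta(G^L)=\Delta_1$) applied to the line graph. Your equality analysis (constancy of $d_i+d_j$ on edges of a connected graph forcing regularity or semi-regular bipartiteness) is also sound, so there is nothing essential to add.
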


\medskip

The following lemma is proved in varying ways in \cite{shu04,das11,liu2013}.
\begin{lem}\label{lem:shu04}
$$\mu(G) \leq \frac{d_{2}-1+\sqrt{(d_{2}-1)^{2}+4d_{1}}}{2}$$
with equality if and only if $G$ is regular or $n-1=d_{1}>d_{2}=d_{n}.$
\end{lem}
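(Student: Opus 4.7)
My plan is to derive Lemma~\ref{lem:shu04} directly as the specialisation of bound (\ref{eq:elphick2}) to the case $k = 2$, so essentially no new work is required beyond an algebraic simplification and a reading of the already-stated equality condition.

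First I would set $k = 2$ in (\ref{eq:elphick2}). The sum $\sum_{i=1}^{k-1}(d_i - d_k)$ collapses to the single term $d_1 - d_2$, so the bound reads
\[
\mu(G) \leq \frac{d_2 - 1 + \sqrt{(d_2 + 1)^2 + 4(d_1 - d_2)}}{2}.
\]
The main (entirely routine) step is then to verify the identity
\[
(d_2 + 1)^2 + 4(d_1 - d_2) = d_2^2 + 2d_2 + 1 + 4d_1 - 4d_2 = (d_2 - 1)^2 + 4d_1,
\]
which immediately rewrites the right-hand side in the form displayed in the statement of Lemma~\ref{lem:shu04}.

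For the equality case I would invoke the Liu--Weng characterisation recalled after (\ref{eq:elphick2}): equality holds in (\ref{eq:elphick2}) if and only if $G$ is regular or there exists $2 \leq t \leq k$ with $d_1 = d_{t-1} = n - 1$ and $d_t = d_n$. With $k = 2$ the only admissible value is $t = 2$, which forces $d_1 = n-1$ and $d_2 = d_n$. If in addition $d_1 = d_2$, then $d_1 = d_2 = d_n$ and $G$ is regular, so the non-regular extremal case is precisely $n - 1 = d_1 > d_2 = d_n$, as required.

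Since both the inequality and the equality characterisation come for free from (\ref{eq:elphick2}) and the Liu--Weng result cited just after it, there is no genuine obstacle; the only thing to watch is that the specialisation $k = 2$ is consistent with the range $1 \leq k \leq n$ (trivially true whenever $n \geq 2$, and the $n = 1$ case is vacuous since $\mu(G) = 0$).
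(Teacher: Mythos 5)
Your derivation is correct: the algebra $(d_2+1)^2 + 4(d_1-d_2) = (d_2-1)^2 + 4d_1$ is right, and the equality analysis (with $k=2$ forcing $t=2$, and the observation that $d_1=d_2$ collapses to the regular case) recovers exactly the stated characterisation. Note, however, that the paper does not prove this lemma at all; it simply cites it from Shu--Wu, Das, and Liu--Weng, where it is established by direct arguments predating (or independent of) the general bound. Your route instead obtains it as the $k=2$ specialisation of (\ref{eq:elphick2}) (Lemma~\ref{lem:edwards83}), which is legitimate provided one accepts the Liu--Weng proof of that general bound as given -- and the paper does, since it states the bound and its equality condition without proof and later remarks that Theorem~\ref{thm:chen11} (whose proof uses Lemma~\ref{lem:shu04}) is just $\psi_2$ of the general family. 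So your approach buys economy and consistency with the paper's unifying viewpoint, at the cost of resting on the heavier general theorem rather than the shorter self-contained arguments in the original references; there is no circularity, since the Liu--Weng proof of the general bound does not depend on the $k=2$ case.
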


\medskip

Chen \emph{et al.} combined Lemma~\ref{lem:chen02} and Lemma~\ref{lem:shu04} to prove the following result.
\begin{thm}\label{thm:chen11}(Theorem 3.4 in \cite{chen11})
$$q(G) \leq 2+\frac{\Delta_{2}-1+\sqrt{(\Delta_{2}-1)^{2}+4\Delta_{1}}}{2}$$
with equality if and only if $G$ is regular, or semi-regular bipartite, or the tree obtained by joining
an edge to the centers of two stars $K_{1,\frac{n}{2}-1}$ with even $n,$ or $n-1=d_{1}=d_{2} > d_{3}=d_{n}=2.$
\end{thm}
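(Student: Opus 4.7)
The plan is to combine Lemma~\ref{lem:chen02} and Lemma~\ref{lem:shu04} directly, using the fact that, by construction, $\Delta_1 \geq \Delta_2 \geq \ldots \geq \Delta_m$ is precisely the degree sequence of $G^L$ in non-increasing order. First I would invoke Lemma~\ref{lem:chen02} to rewrite $q(G) = 2 + \mu(G^L)$, reducing the problem to bounding the spectral radius of the line graph. Then I would apply Lemma~\ref{lem:shu04} to $G^L$, substituting the two largest degrees $\Delta_1$ and $\Delta_2$ in place of $d_1$ and $d_2$. This yields
\[
\mu(G^L) \leq \frac{\Delta_2 - 1 + \sqrt{(\Delta_2 - 1)^2 + 4\Delta_1}}{2},
\]
and adding $2$ to both sides gives the stated inequality.

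The substantial work lies in the equality characterization. By Lemma~\ref{lem:shu04}, equality holds iff $G^L$ is regular, or $G^L$ satisfies $n(G^L) - 1 = \Delta_1 > \Delta_2 = \Delta_m$. Since $n(G^L) = m$, I would translate each condition into a structural constraint on $G$:
\begin{itemize}
\item It is a classical fact (Whitney-type) that $G^L$ is regular iff $G$ is regular or semi-regular bipartite, which accounts for the first two listed cases.
\item The condition $m - 1 = \Delta_1$ says there is an edge $e_0 = uv$ of $G$ such that every other edge of $G$ shares a vertex with $e_0$; equivalently, $V(G) \setminus \{u,v\}$ induces an independent set, and every vertex of $G$ is adjacent to $u$ or $v$.
\item The condition $\Delta_2 = \Delta_m$ forces $d_x + d_y - 2$ to take a common value on every edge of $G$ other than $e_0$.
\end{itemize}

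I would then carry out a case analysis on this restricted family, splitting according to whether $u$ and $v$ share common neighbors and what the degrees $d_u$, $d_v$ are. The goal is to show these combinatorial constraints force $G$ to be either the double star obtained by joining the centers of two copies of $K_{1,n/2-1}$ (so $u,v$ are the centers, $n$ is even, and all other edges are pendants) or the graph satisfying $n - 1 = d_1 = d_2 > d_3 = d_n = 2$ (so $u,v$ are universal and every remaining vertex has degree exactly $2$). The main obstacle will be this case analysis---in particular, ruling out all intermediate degree sequences compatible with the two combinatorial constraints and confirming that no edge among the non-endpoints of $e_0$ can appear. The converse direction, that each of the four extremal families attains equality, reduces to a short calculation of $d_x + d_y - 2$ on the edges of $G$ and is routine.
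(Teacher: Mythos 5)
Your proposal follows exactly the route the paper indicates: the theorem is quoted from Chen \emph{et al.}, who (as the paper notes) obtained it by combining Lemma~\ref{lem:chen02} with Lemma~\ref{lem:shu04} applied to $G^L$, which is precisely your plan for both the inequality and the translation of the equality conditions. The paper supplies no further detail, so your sketched case analysis for the extremal graphs goes beyond what is written there but is consistent with the stated characterization.
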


\medskip

Stating (\ref{eq:elphick2}) as a Lemma we have:
\begin{lem}\label{lem:edwards83}
For $1 \leq k \leq n,$
\begin{equation}\label{eq:dwards83}
\mu(G) \leq \phi_{k} := \frac{d_{k}-1+\sqrt{(d_{k}+1)^{2}+4\sum_{i=1}^{k-1}(d_{i}-d_{k})}}{2}
\end{equation}
with equality if and only if $G$ is regular or there exists $2 \leq t \leq k$ such that
$n-1=d_{1}=d_{t-1}>d_{t}=d_{n}.$ Furthermore,
$$\phi_{\ell}=\min\{\phi_{k} ~|~ 1 \leq k \leq n \}$$
where $3 \leq \ell \leq n$ is the smallest integer such that $\sum_{i=1}^{\ell}d_{i}<\ell(\ell-1).$
\end{lem}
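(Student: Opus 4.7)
The inequality $\mu(G) \le \phi_k$ together with its equality characterization is exactly the Liu--Weng result \cite{liu2013} already derived in Section~2 from (\ref{eq:edwards})--(\ref{eq:elphick2}), so no new argument is needed for the first assertion. My task is therefore the ``Furthermore'' clause: the minimum of $\phi_k$ over $1 \le k \le n$ is attained at the specific index $\ell$.

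The plan is to compare consecutive values $\phi_k$ and $\phi_{k+1}$ directly from their defining quadratics. Writing $S_j = \sum_{i=1}^{j} d_i$, I will treat $\phi_k$ as the larger root of
\[
h_k(z) = z^2 + (1-d_k)z + (k-2)d_k - S_{k-1},
\]
the other root being non-positive because $S_{k-1} \ge (k-1)d_k$ forces the constant term $(k-2)d_k - S_{k-1} \le -d_k \le 0$. A routine expansion will give the identity
\[
h_k(z) - h_{k+1}(z) = (d_{k+1} - d_k)(z - k + 1),
\]
together with the separate evaluation $h_k(k-1) = k(k-1) - S_k$. Substituting $z = \phi_k$ into the identity then yields $h_{k+1}(\phi_k) = (d_k - d_{k+1})(\phi_k - k + 1)$; using $d_k \ge d_{k+1}$ and noting that the sign of $\phi_k - (k-1)$ coincides with the sign of $S_k - k(k-1)$ (because $\phi_k$ is the larger root of an upward parabola whose smaller root is non-positive), I read off the key comparison: $\phi_k \ge \phi_{k+1}$ if and only if $S_k \ge k(k-1)$.

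Setting $f(k) = S_k - k(k-1)$, the problem reduces to locating the sign change of $f$. The first difference $f(k+1) - f(k) = d_{k+1} - 2k$ is strictly decreasing in $k$, because $d_{k+1}$ is non-increasing while $2k$ is strictly increasing; hence $f$ is strictly concave as a function of $k$ and can cross from non-negative to negative at most once. Combined with the boundary values $f(1) = d_1 \ge 0$ and $f(2) = d_1 + d_2 - 2 \ge 0$ (the edgeless case being trivial), the smallest index $\ell \ge 3$ with $f(\ell) < 0$ satisfies $f(k) \ge 0$ for $1 \le k < \ell$ and, by concavity, $f(k) < 0$ for every $k \ge \ell$. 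This yields $\phi_1 \ge \phi_2 \ge \cdots \ge \phi_\ell \le \phi_{\ell+1} \le \cdots \le \phi_n$, placing the minimum at $k = \ell$. The main obstacle is the sign comparison for $\phi_k - \phi_{k+1}$; once that identity is in hand, the concavity of $f$ makes the remainder immediate.
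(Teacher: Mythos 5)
Your argument is correct, and it is worth noting that the paper itself never proves Lemma~5: it is stated as a repackaging of the Liu--Weng bound, with both the equality characterization and the ``Furthermore'' minimization clause delegated to the citation of \cite{liu2013}; the only related computation in the paper is the Section~2 derivation showing that the Edwards--Elphick value $y-1$ coincides with $\phi_{a+1}$. Your proposal therefore supplies a genuinely self-contained proof of the minimization claim. The mechanism is sound: $\phi_k$ is indeed the larger root of $h_k(z)=z^2+(1-d_k)z+(k-2)d_k-S_{k-1}$, the telescoping identity $h_k(z)-h_{k+1}(z)=(d_{k+1}-d_k)(z-k+1)$ checks out, $h_k(k-1)=k(k-1)-S_k$ is correct, and since $k-1\ge 0$ lies to the right of the non-positive root of the upward parabola $h_k$, the equivalence $\phi_k\ge k-1 \iff S_k\ge k(k-1)$ follows; combined with $h_{k+1}(\phi_k)=(d_k-d_{k+1})(\phi_k-k+1)$ and $\phi_k\ge 0\ge r_-^{(k+1)}$ this gives the unimodality $\phi_1\ge\cdots\ge\phi_\ell\le\cdots\le\phi_n$. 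Two small points deserve a sentence each in a polished write-up: first, your ``if and only if'' for $\phi_k\ge\phi_{k+1}$ is really the pair of one-sided implications ($S_k\ge k(k-1)\Rightarrow\phi_k\ge\phi_{k+1}$ and $S_k<k(k-1)\Rightarrow\phi_k\le\phi_{k+1}$), which is all the unimodality argument needs but is not literally an equivalence; second, the index $\ell$ need not exist for $K_n$ (where $S_n=n(n-1)$ and all $\phi_k$ coincide), an edge case already latent in the lemma's statement rather than a defect of your proof. What your route buys is transparency: the monotone switch is controlled entirely by the concave sequence $f(k)=S_k-k(k-1)$, which also makes visible why $\ell$ is exactly $\lceil y\rceil$ for the $y$ of equation~(1).
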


\medskip

Combining Lemma~\ref{lem:chen02} and Lemma~\ref{lem:edwards83}
provides the following series of upper bounds for the signless Laplacian spectral radius.
\begin{thm}\label{thm:signlessLap}
For $1 \leq k \leq m,$ we have
\begin{equation}\label{eq:signlessLap}
q(G) \leq \psi_{k} := 1 + \frac{\Delta_{k}+1+
\sqrt{(\Delta_{k}+1)^{2}+4\sum_{i=1}^{k-1}(\Delta_{i}-\Delta_{k})}}{2}
\end{equation}
with equality if and only if
$\Delta_{1}=\Delta_{m}$ or there exists $2 \leq t \leq k$ such that
$m-1=\Delta_{1}=\Delta_{t-1} > \Delta_{t}=\Delta_{m}.$ Furthermore,
$$\psi_{\ell}=\min\{\psi_{k} ~|~ 1 \leq k \leq m \}$$
where $3 \leq \ell \leq m$ is the smallest integer such that $\sum_{i=1}^{\ell}\Delta_{i}<\ell(\ell-1).$
\end{thm}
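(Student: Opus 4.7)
The proof is essentially a substitution: apply Lemma~\ref{lem:edwards83} to the line graph $G^{L}$ and then invoke Lemma~\ref{lem:chen02} to translate the bound on $\mu(G^{L})$ into a bound on $q(G)$. The line graph $G^{L}$ has exactly $m$ vertices (one for each edge of $G$), and by the definition of $\Delta_{ij}=d_{i}+d_{j}-2$ for $i\sim j$, the non-increasing degree sequence of $G^{L}$ is precisely $\Delta_{1}\geq \Delta_{2}\geq\ldots\geq \Delta_{m}$. So Lemma~\ref{lem:edwards83}, with $G$ replaced by $G^{L}$, $n$ by $m$, and $d_{i}$ by $\Delta_{i}$, gives for each $1\leq k\leq m$
$$
\mu(G^{L})\leq \frac{\Delta_{k}-1+\sqrt{(\Delta_{k}+1)^{2}+4\sum_{i=1}^{k-1}(\Delta_{i}-\Delta_{k})}}{2}.
$$

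Adding $2$ to both sides and using $q(G)=2+\mu(G^{L})$ yields (\ref{eq:signlessLap}) after collecting the additive constant inside the fraction, since $2+\tfrac{\Delta_{k}-1+X}{2}=1+\tfrac{\Delta_{k}+1+X}{2}$. The equality characterisation then follows immediately from the corresponding clause of Lemma~\ref{lem:edwards83}: $G^{L}$ regular becomes $\Delta_{1}=\Delta_{m}$, while the bidegreed case $n-1=d_{1}=d_{t-1}>d_{t}=d_{n}$ translates to $m-1=\Delta_{1}=\Delta_{t-1}>\Delta_{t}=\Delta_{m}$ for some $2\leq t\leq k$.

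For the minimising index statement, I would again quote the second half of Lemma~\ref{lem:edwards83} applied to $G^{L}$: the function $\psi_{k}$ is, up to the affine transformation $x\mapsto x+2$ that preserves minima, exactly $\phi_{k}$ computed on the degree sequence $\Delta_{1},\ldots,\Delta_{m}$. Hence the minimum is attained at the smallest $3\leq \ell\leq m$ with $\sum_{i=1}^{\ell}\Delta_{i}<\ell(\ell-1)$.

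There is no genuinely hard step here; the proof is a straightforward composition of two prior lemmas. The only points requiring care are the algebraic simplification converting $\mu(G^{L})+2$ into the form written in (\ref{eq:signlessLap}), and the bookkeeping involved in relabelling ``$n$'' as $m$ and ``$d_{i}$'' as $\Delta_{i}$ when importing the equality condition and the minimising index from Lemma~\ref{lem:edwards83}.
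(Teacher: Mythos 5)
Your proposal is correct and matches the paper's argument exactly: the paper likewise notes that $G^{L}$ is simple and derives (\ref{eq:signlessLap}) directly from Lemma~\ref{lem:chen02} and Lemma~\ref{lem:edwards83}, with the equality and minimising-index clauses inherited verbatim. You simply spell out the relabelling and the algebraic step $2+\tfrac{\Delta_{k}-1+X}{2}=1+\tfrac{\Delta_{k}+1+X}{2}$ that the paper leaves implicit.
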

\begin{proof}
$G^{L}$ is simple.
Hence (\ref{eq:signlessLap}) is a direct result of (\ref{eq:chen02}) and (\ref{eq:dwards83}).
The sufficient and necessary conditions are immediately those in Lemma~\ref{lem:edwards83}.
\end{proof}

\medskip

\begin{rem}
Note that Theorem~\ref{thm:signlessLap} generalizes both Theorem~\ref{thm:cvetkovic07} and Theorem~\ref{thm:chen11}
since these bounds are precisely $\psi_{1}$ and $\psi_{2}$ in (\ref{eq:signlessLap}) respectively.

We list all the extremal graphs with equalities in (\ref{eq:signlessLap}) in the following.
From Theorem~\ref{thm:cvetkovic07} the graphs with $q(G)=\psi_{1},$
i.e. $\Delta_{1}=\Delta_{m},$ are regular or semi-regular bipartite.

From Theorem~\ref{thm:chen11} the graphs
with $q(G) < \psi_{1}$ and $q(G)=\psi_{2},$ i.e. $m-1=\Delta_{1}>\Delta_{2}=\Delta_{m},$
are the tree obtained by joining an edge to the centers of two stars $K_{1,\frac{n}{2}-1}$ with even n,
or $n-1=d_{1}=d_{2}>d_{3}=d_{n}=2.$

The only graph with $q(G) < \min\{\psi_{i}|i=1,2\}$ and $q(G)=\psi_{3},$ i.e.
$m-1=\Delta_{1}=\Delta_{2}>\Delta_{3}=\Delta_{m},$ is the $4$-vertex graph $K_{1,3}^{+}$
obtained by adding one edge to $K_{1,3}.$
\begin{picture}(50,75)
\put(10,14){\circle*{3}} \put(40,14){\circle*{3}}
\put(25,40){\circle*{3}} \put(25,70){\circle*{3}}
\qbezier(10,14)(25,14)(40,14)
\qbezier(10,14)(17.5,27)(25,40)
\qbezier(40,14)(32.5,27)(25,40)
\qbezier(25,40)(25,55)(25,70)
\put(15,2){$K_{1,3}^{+}$}
\end{picture}

We now prove that no graph satisfies $q(G) < \min\{\psi_{i}|1\leq i \leq k - 1\}$ and $q(G)=\psi_{k}$ where $m \geq k \geq 4.$ Let $G$ be a counter-example such that $m-1=\Delta_{1}=\Delta_{k-1}>\Delta_{k}=\Delta_{m}.$
Since $\Delta_{3}=m-1$ there are at least $3$ edges incident to all other edges in $G.$
If these $3$ edges form a $3$-cycle then there is nowhere to place the fourth edge,
which is a contradiction. Hence they are incident to a common vertex, and $G$ has to be a star graph.
However a star graph is semi-regular bipartite so $q(G)=\psi_{1},$ which completes the proof.
\end{rem}

\medskip

\begin{rem}

By analogy with (\ref{eq:edwards}), if $z$ is defined by the equality
\[
z(z - 1) = \sum_{k = 1}^{\lfloor{z}\rfloor} \Delta_k + (z - \lfloor{z}\rfloor)\Delta_{\lceil{z}\rceil},
\]
then $q \le z + 1$.
This bound is exact for $d-$regular graphs, because we then have:

\[
2d = q \le z + 1 = 2 + (z - 1) = 2 + \frac{1}{z}\left(\sum_{k = 1}^{\lfloor{z}\rfloor} \Delta + (z - \lfloor{z}\rfloor)\Delta\right) = 2 +\Delta = 2d.
\]
\end{rem}
\subsection{Experimental comparison}
It is straightforward to compare the above bounds experimentally using the named graphs and LineGraph function in Wolfram Mathematica. Theorem 1 is exact for some graphs (eg Wheels) for which Theorems 5 and 7 are inexact  and Theorems 5 and 7 are exact for some graphs (eg complete bipartite) for which Theorem 1 is inexact. Tabulated below are the numbers of named irregular graphs on 10, 16, 25 and 28 vertices in Mathematica and the average values of $q$ and the bounds in Theorems 1, 5 and 7.

\[
\begin{array}{cccccc}
\mbox{n} 		& \mbox{irrregular graphs}	 & \mbox{q(G)} 	& \mbox{Theorem 1} 	& \mbox{Theorem 5} 	& \mbox{Theorem 7} 	\\
10			&59					&9.3			&10.0			&10.3			&9.8				\\
16			&48					&10.3		&11.2			&11.5			&11.0			\\
25			&25					&11.5		&13.4			&13.1			&12.6			\\
28			&21					&11.2		&12.6			&12.7			&12.2			\\	
\end{array}
\]

It can be seen that Theorem 5 gives results that are broadly equal on average to Theorem 1 and Theorem 7 gives results which are on average modestly better. This is unsurprising since more data is involved in Theorem 7 than in the other two theorems. For some graphs, $q(G)$ is minimised in Theorem  7 with large values of $k$.

\section{A lower bound for the Q-index}

Elphick and Wocjan \cite{elphick14} defined a measure of graph irregularity, $\nu$, as follows:

\[
\nu = \frac{n\sum d_i^2}{4m^2},
\]

where $\nu \ge 1$, with equality only for regular graphs.

It is well known that $q \ge 2\mu$ and Hofmeister \cite{hofmeister88} has proved that $\mu^2 \ge \sum d_i^2/n$, so it is immediate that:

\[
q \ge 2\mu \ge \frac{4m\sqrt{\nu}}{n}.
\]

Liu and Liu \cite{liu09} improved this bound in the following theorem, for which we provide a simpler proof using Lemma 2.

\begin{thm}
Let $G$ be a graph with irregularity $\nu$ and Q-index $q(G)$. Then

\[
q(G) \ge \frac{4m\nu}{n}.
\]

This is exact for complete bipartite graphs.

\end{thm}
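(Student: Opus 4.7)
The plan is to reduce the claim to a straightforward bound on $\mu(G^L)$ via Lemma~\ref{lem:chen02}. First I would unwind the definition of $\nu$: substituting $\nu = n\sum d_i^2/(4m^2)$ into the right hand side gives the cleaner target
\[
q(G) \;\ge\; \frac{1}{m}\sum_{i=1}^{n} d_i^{2},
\]
which is what I would actually prove. By Lemma~\ref{lem:chen02} this is equivalent to showing
\[
\mu(G^{L}) \;\ge\; \frac{1}{m}\sum_{i=1}^{n} d_i^{2} \;-\; 2.
\]

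The second step is to recognise the right hand side as the average degree of the line graph. Indeed $G^L$ has $m$ vertices, and since a vertex $v$ of $G$ contributes $\binom{d_v}{2}$ edges to $G^L$, we have
\[
2\,|E(G^{L})| \;=\; \sum_{v} d_v(d_v-1) \;=\; \sum_{v} d_v^{2} - 2m,
\]
so the average degree of $G^L$ equals $\frac{1}{m}\sum d_v^{2} - 2$. The third step is then the standard Rayleigh-quotient fact that the spectral radius of any graph is at least its average degree (evaluate $x^{T}Ax/x^{T}x$ on the all-ones vector). Applying this to $G^L$ yields exactly the required inequality, and combining with Lemma~\ref{lem:chen02} gives $q(G) \ge \sum d_v^2/m = 4m\nu/n$.

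For the exactness claim on $K_{a,b}$, I would simply check that $\sum d_i^{2} = ab^{2}+ba^{2} = ab(a+b) = m n$, so the bound evaluates to $n$, while $q(K_{a,b})=a+b=n$ (the signless Laplacian of a bipartite graph is cospectral with the Laplacian, whose largest eigenvalue on $K_{a,b}$ is $a+b$).

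There is not really a hard step here; the only subtlety is noticing that the factor of $2$ appearing in $q(G)=2+\mu(G^L)$ is exactly what converts the average degree of $G^L$ into $\sum d_i^2/m$. That bookkeeping is what makes the proof shorter than the one in \cite{liu09}, which does not route through the line graph.
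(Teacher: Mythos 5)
Your proof is correct and is essentially identical to the paper's: both pass to the line graph via Lemma~\ref{lem:chen02}, compute $m(G^L) = \tfrac{1}{2}\sum d_i^2 - m$, and apply the fact that the spectral radius is at least the average degree. Your verification of exactness for $K_{a,b}$ is also the same computation, just arranged slightly differently (and you make the value of $q(K_{a,b})$ explicit, which the paper leaves implicit).
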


\begin{proof}

Let $G^L$ denote the line graph of $G$. From Lemma 2 we know that $q(G) = 2 + \mu(G^L)$ and it is well known that $n(G^L) = m$ and $m(G^L) = (\sum d_i^2/2) - m$. Therefore:

\[
q = 2 + \mu(G^L) \ge 2 + \frac{2m(G^L)}{n(G^L)} = 2 + \frac{2}{m}\left(\frac{\sum d_i^2}{2} - m\right) = \frac{\sum d_i^2}{m} = \frac{4m\nu}{n}.
\]

For the complete bipartite graph $K_{s,t}$ :
\[
q \ge \frac{\sum_i d_i^2}{m} = \frac{\sum_{ij\in E} (d_i + d_j)}{m} = d_i + d_j = s + t = n, \mbox{ which is exact}.
\]

\end{proof}

\section{Generalised $r-$partite graphs}

In a series of papers, Bojilov and others have generalised the concept of an $r-$partite graph. They define the parameter $\phi$ to be the smallest integer $r$ for which $V(G)$ has an $r-$partition:

\[
V(G) = V_1\cup V_2 \cup ... \cup V_r, \mbox{  such that  } d(v) \le n - n_i, \mbox{  where  } n_i = |V_i|,
\]
for all $v \in V_i$ and for $i = 1,2, ... ,r$.

Bojilov \emph{et al} \cite{bojilov13} proved that $\phi(G) \le \omega(G)$ and Khadzhiivanov and Nenov \cite{khad04} proved that:

\[
\frac{n}{n - d} \le \phi(G).
\]

Despite this bound, Elphick and Wocjan \cite{elphick14} demonstrated that:

\[
\frac{n}{n - \mu} \not\le \phi(G).
\]

However, it is proved below in Corollary 10 that:

\[
\frac{n}{n - \mu} \le \frac{n}{n - y +1} < \phi(G)+ \frac{1}{3}.
\]

\emph{Definition}

If $H$ is any graph of order $n$ with degree sequence $d_H(1) \ge d_H(2) \ge ... \ge d_H(n)$, and if $H^*$ is any graph of order $n$ with degree sequence $d_{H^*}(1) \ge d_{H^*}(2) \ge ... \ge d_{H^*}(n)$, such that $d_H(i) \le d_{H^*}(i)$ for all $i$, then $H^*$ is said to "dominate" $H$.

Erd\"os proved that if $G$ is any graph of order $n$, then there exists a graph $G^*$ of order $n$, where $\chi(G^*) = \omega(G) = r$, such that $G^*$ dominates $G$ and $G^*$ is complete $r-$partite.

\begin{thm}

If $G$ is any graph of order $n$, then there exists a graph $G^*$ of order $n$, where $\omega(G^*) = \phi(G) = r$, such that $G^*$ dominates $G$, and $G^*$ is complete $r-$partite.

\end{thm}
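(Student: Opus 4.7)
The plan is to mimic the Erd\"os-style construction but to replace the role of an optimal coloring by an optimal $\phi$-partition. Set $r = \phi(G)$ and fix a partition $V = V_1 \cup \cdots \cup V_r$ achieving the minimum, so that $d_G(v) \le n - n_i$ for every $v \in V_i$, where $n_i = |V_i|$. Relabel the parts so that $n_1 \ge n_2 \ge \cdots \ge n_r$; minimality of $r$ forces $n_i \ge 1$, since any empty class could be discarded to give a valid partition with fewer parts. I would then take $G^*$ to be the complete $r-$partite graph on the same vertex set with part sizes $n_1, n_2, \ldots, n_r$. Because every part is non-empty, $\omega(G^*) = r = \phi(G)$ is automatic, and $G^*$ is complete $r-$partite by construction.

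The content of the theorem thus reduces to showing that $G^*$ dominates $G$. Any vertex in the $G^*$-part of size $n_i$ has degree $n - n_i$, so the sorted (non-increasing) degree sequence of $G^*$ consists of $n_r$ copies of $n - n_r$, followed by $n_{r-1}$ copies of $n - n_{r-1}$, and so on, ending with $n_1$ copies of $n - n_1$. Writing $i_k := 1 + \sum_{j=k+1}^{r} n_j$ for the first position at which the value $n - n_k$ appears in that sequence, the pointwise dominance $d_G(i) \le d_{G^*}(i)$ reduces, for each $k \in \{1,\ldots,r\}$, to the single inequality
\[
\#\{v \in V(G) : d_G(v) > n - n_k\} \le \sum_{j=k+1}^{r} n_j.
\]
Once this is in place, every position $i$ in the block where $G^*$ has value $n - n_k$ satisfies $d_G(i) \le n - n_k = d_{G^*}(i)$, because the sorted $G$-degrees are non-increasing and strictly fewer than $i_k$ of them exceed $n - n_k$.

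The displayed inequality is the only step requiring any thought, and it is essentially immediate from the defining property of the $\phi$-partition. Any vertex $v$ with $d_G(v) > n - n_k$ must lie in a class $V_j$ with $n_j < n_k$; by the monotone labelling, such an index $j$ necessarily satisfies $j > k$. Hence the count is bounded by $\sum_{j : n_j < n_k}|V_j| \le \sum_{j=k+1}^{r} n_j$, completing the argument. I do not anticipate a real obstacle; the only subtlety is handling possible ties among the $n_i$, which is automatic because the strict inequality $n_j < n_k$ already excludes equal-valued indices.
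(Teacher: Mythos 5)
Your construction is exactly the paper's: take an optimal $\phi$-partition and form the complete $r$-partite graph $K_{n_1,\dots,n_r}$ on the same vertex set. The only difference is in verifying domination, which the paper does in one line by noting $d_{G^*}(v) = n - n_i \ge d_G(v)$ for every vertex $v$ (vertex-wise domination immediately implies the sorted-sequence inequality), whereas you re-derive that implication by hand with a correct but more laborious counting argument over the sorted degree sequences.
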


\begin{proof}

Let $G$ be a generalised $r-$partite graph with $\phi(G) = r$ and $n_i = |V_i|$, and let $G^*$ be the complete $r-$partite graph $K_{n_1, ... , n_r}$. Let $d(v)$ denote the degree of vertex $v$ in $G$ and $d^*(v)$ denote the degree of vertex $v$ in $G^*$. Clearly $\chi(G^*) = \omega(G^*) = r$, and by the definition of a generalised $r-$partite graph:

\[
d^*(v) = n - n_i \ge d(v)
\]
for all $v \in V_i$ and for $i = 1, ..., r$. Therefore $G^*$ dominates $G$.

\end{proof}

\begin{lem} (Lemma 4 in \cite{edwards83})

Assume $G^*$ dominates $G$. Then $y(G^*) \ge y(G)$.

\end{lem}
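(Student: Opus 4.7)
The plan is to view $y(G)$ as the unique zero in $[1,n]$ of the function
\[
h_{G}(y) := y(y-1) - g_{G}(y), \quad g_{G}(y) := \sum_{k=1}^{\lfloor y \rfloor} d_{k} + (y - \lfloor y \rfloor)\, d_{\lceil y \rceil},
\]
and to exploit the fact that dominance of degree sequences yields $g_{G^{*}} \ge g_{G}$ pointwise, hence $h_{G^{*}} \le h_{G}$ pointwise.

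First I would record two structural facts about $h_{G}$. Between consecutive integers $k-1$ and $k$, $g_{G}$ is linear with slope $d_{k}$; since the degrees are in non-increasing order the slopes $d_{2}, d_{3}, \ldots, d_{n}$ are non-increasing, so $g_{G}$ is concave and $h_{G}$ is convex on $[1,n]$. Moreover $h_{G}(1) = -d_{1} \le 0$ and $h_{G}(n) = n(n-1) - 2m \ge 0$, so by convexity $h_{G}$ has a unique zero $y(G)$ in $[1,n]$, is $\le 0$ on $[1, y(G)]$, and is $\ge 0$ on $[y(G), n]$. This recovers the single-valuedness of $y$ noted in \cite{edwards83}.

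Next, domination gives $d_{G^{*}}(i) \ge d_{G}(i)$ for every $i$, so term-by-term $g_{G^{*}}(y) \ge g_{G}(y)$ and equivalently $h_{G^{*}}(y) \le h_{G}(y)$ throughout $[1,n]$. Evaluating at $y=y(G)$ gives $h_{G^{*}}(y(G)) \le h_{G}(y(G)) = 0$, which places $y(G)$ in the region where $h_{G^{*}} \le 0$; by the previous paragraph that region is exactly $[1, y(G^{*})]$, so $y(G) \le y(G^{*})$.

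The main obstacle is really the first step: rigorously confirming that the two structural properties of $h_{G}$ (convexity and the boundary signs) together force a unique, well-behaved crossing. Once that is in hand, monotonicity of $y$ in the degree sequence is essentially immediate from the pointwise comparison $h_{G^{*}} \le h_{G}$.
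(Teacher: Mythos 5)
Your argument is correct, and it is worth noting that the paper itself gives no proof of this lemma: it is imported verbatim as Lemma~4 of \cite{edwards83}, so there is no in-paper proof to compare against. Your route --- realising $y(G)$ as the unique zero in $[1,n]$ of $h_G(y)=y(y-1)-g_G(y)$, checking that $g_G$ is continuous piecewise linear with non-increasing slopes $d_2\ge d_3\ge\cdots\ge d_n$ hence concave, and then using the pointwise inequality $h_{G^*}\le h_G$ forced by domination --- is the natural monotonicity argument and it goes through. The only point I would tighten is the uniqueness step: convexity alone does not give a unique zero (a convex function can vanish on an interval, or at two points with $h<0$ between them), so you should say explicitly that $h_G$ is \emph{strictly} convex, being the sum of the strictly convex $y(y-1)$ and the convex $-g_G$; then $h_G(1)=-d_1\le 0$ and $h_G(n)=n(n-1)-2m\ge 0$ do force $\{y\in[1,n]: h_G(y)\le 0\}=[1,y(G)]$ with $y(G)$ the unique zero (handling the degenerate case $d_1=0$, where $y(G)=1$, separately if you want to be fastidious). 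With that adjustment the evaluation $h_{G^*}(y(G))\le h_G(y(G))=0$ indeed places $y(G)$ in $[1,y(G^*)]$, completing the proof.
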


\begin{thm}

\[
\frac{n}{n - y(G) +1} < \phi(G) + \frac{1}{3}.
\]

\end{thm}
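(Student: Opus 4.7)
The plan is to chain together the three ingredients that the excerpt has set up: Theorem 9 (the Erd\"os-style domination result for $\phi$), the Edwards--Elphick domination lemma for $y$, and the classical bound (\ref{eq:chris}) applied not to $G$ but to the dominating graph. The strategy is essentially a reduction from $\phi$ to $\omega$ via domination.

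First I would invoke Theorem 9 to produce a complete $r$-partite graph $G^*$ on $n$ vertices, where $r = \phi(G) = \omega(G^*)$, that dominates $G$. Next I would apply the preceding Lemma (Lemma 4 of \cite{edwards83}) to conclude that $y(G^*) \ge y(G)$. Since $y$ is bounded above by $n$, both denominators $n - y(G) + 1$ and $n - y(G^*) + 1$ are positive, so the inequality $y(G^*) \ge y(G)$ passes through to
\[
\frac{n}{n - y(G) + 1} \le \frac{n}{n - y(G^*) + 1}.
\]

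Finally I would apply the Edwards--Elphick clique bound (\ref{eq:chris}) to $G^*$ to obtain
\[
\frac{n}{n - y(G^*) + 1} < \omega(G^*) + \frac{1}{3} = \phi(G) + \frac{1}{3},
\]
and concatenating the two displayed inequalities gives the claim. There is no serious obstacle here, since each of the three ingredients is stated in the paper; the only small point to be careful about is verifying that the denominator $n - y(G^*) + 1$ is strictly positive so that the middle inequality is valid, which follows from the bound $1 \le y \le n$ recorded in the introduction.
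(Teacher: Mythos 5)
Your proof is correct and follows essentially the same route as the paper's: both construct the dominating complete $r$-partite graph $G^*$ with $\omega(G^*)=\phi(G)$, apply the Edwards--Elphick domination lemma to get $y(G^*)\ge y(G)$, and then invoke bound (\ref{eq:chris}) on $G^*$. Your added check that the denominators are positive (via $1\le y\le n$) is a small point of care the paper leaves implicit.
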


\begin{proof}

Let $G^*$ be any graph of order $n$, where $\omega(G^*) = \phi(G)$ such that $G^*$ dominates $G$. (By Theorem 7 at least one such graph $G^*$ exists.) Then, using Lemma 8:

\[
\frac{n}{n - y(G) + 1} \le \frac{n}{n - y(G^*) + 1} < \omega(G^*) + \frac{1}{3} = \phi(G) + \frac{1}{3} \le \omega(G) + \frac{1}{3}.
\]

\end{proof}

\begin{cor}

\[
\frac{n}{n - \mu} < \phi(G) + \frac{1}{3}.
\]

\end{cor}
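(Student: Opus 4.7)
The plan is to deduce the corollary directly by chaining the bound $\mu \le y - 1$ (proved by Edwards and Elphick, see equation (\ref{eq:edwards}) and the discussion following it) with Theorem 9.

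First, I would recall that $\mu(G) \le y(G) - 1$, which rearranges to $n - y(G) + 1 \le n - \mu(G)$. Both quantities are strictly positive: since $\mu \le n-1$ we have $n - \mu > 0$, and since $y \le n$ (a bound from \cite{edwards83}) we have $n - y + 1 \ge 1$. Taking reciprocals reverses the inequality, and multiplying by $n > 0$ preserves it, giving
\[
\frac{n}{n - \mu(G)} \le \frac{n}{n - y(G) + 1}.
\]

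Next I would invoke Theorem 9, which asserts $\frac{n}{n - y(G) + 1} < \phi(G) + \frac{1}{3}$. Combining the two inequalities yields the desired conclusion
\[
\frac{n}{n - \mu(G)} \le \frac{n}{n - y(G) + 1} < \phi(G) + \frac{1}{3}.
\]

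There is no substantive obstacle: the corollary is a two-line consequence. The only points requiring care are (i) verifying positivity of the denominators before taking reciprocals, and (ii) citing $\mu \le y - 1$ from the Edwards--Elphick result (equivalently Lemma~\ref{lem:edwards83}) rather than reproving it. The entire content of the corollary is that Theorem 9, which is stated in terms of $y$, automatically sharpens to the more familiar spectral parameter $\mu$ because of the uniform gap $y \ge \mu + 1$.
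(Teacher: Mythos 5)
Your proof is correct and is essentially the paper's own argument: the paper proves the corollary as "immediate since $\mu \le y - 1$," i.e.\ exactly the chain $\frac{n}{n-\mu} \le \frac{n}{n-y+1} < \phi(G) + \frac{1}{3}$ via the theorem stated in terms of $y$. Your added check that both denominators are positive before taking reciprocals is a sensible (if minor) extra precaution that the paper leaves implicit.
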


\begin{proof}
Immediate since $\mu \le y - 1$.

\end{proof}

\end{document}